\newcommand\BibTeX{{\rmfamily B\kern-.05em \textsc{i\kern-.025em b}\kern-.08em
T\kern-.1667em\lower.7ex\hbox{E}\kern-.125emX}}
\newtheorem{thm}{Theorem}[section]
\newtheorem{defn}[thm]{Definition}
\newtheorem{lem}[thm]{Lemma}
\newcommand{\be}{\begin{equation}}
\newcommand{\ee}{\end{equation}}
\newcommand{\bea}{\begin{eqnarray}}
\newcommand{\eea}{\end{eqnarray}}
\newcommand{\mR}{\mathbb{R}}
\newcommand{\diag}{\text{diag}}
\newcommand{\argminx}{\underset{x}{\operatorname{argmin\ }}}
\begin{document}

\runningheads{X. Xiao and D. Chen}{Multiplicative Iteration for NNQP}

\title{Multiplicative Iteration for Nonnegative Quadratic Programming}

\author{X. Xiao \affil{1}\footnotemark[2] and D. Chen\affil{1}\corrauth}

\address{\affilnum{1}School of Securities and Futures, Southwestern University of Finance and Economics, Sichuan, China, 610041}

\corraddr{School of Securities and Futures, Southwestern University of Finance and Economics, Sichuan, China, 610041. E-mail: chendonghui@swufe.edu.cn}

\begin{abstract}
In many applications, it makes sense to solve the least square problems with nonnegative constraints. In this article, we present a new multiplicative iteration that monotonically decreases the value of the nonnegative quadratic programming (NNQP) objective function. This new algorithm has a simple closed form and is easily implemented on a parallel machine. We prove the global convergence of the new algorithm and apply it to solving image super-resolution and color image labelling problems. The experimental results demonstrate the effectiveness and broad applicability of the new algorithm.
\end{abstract}

\keywords{Nonnegative Constraints; Multiplicative Iteration; NNQP; NNLS}

\maketitle

\footnotetext[2]{E-mail: xxiao@swufe.edu.cn}

\vspace{-6pt}

\section{Introduction}
\vspace{-2pt}

Numerical problems with nonnegativity constraints on solutions are pervasive throughout science, engineering and business. These constraints usually come from physical grounds corresponding to amounts and measurements , such as solutions associated with image restoration and reconstruction~\cite{calaresg04,nmf01,nast00,zdci08} and chemical concentrations~\cite{beke04}, etc.. Nonnegativity constraints very often arise in least squares problems, i.e.  nonnegative least squares (NNLS)
\be
\argminx F(x) = \argminx ||Ax-b||_2^2 \quad \text{s.t} \quad x \geq 0.\label{eq:nnls}
\ee
The problem can be stated equivalently as the following nonnegative quadratic programing (NNQP),
\be
\argminx F(x) = \argminx \frac{1}{2} x^T Q x - x^T h  \quad \text{s.t} \quad x \geq 0.\label{eq:nnqp}
\ee
NNLS~\eqref{eq:nnls} and NNQP~\eqref{eq:nnqp} have the same unique solution. In this article, we assume $Q = A^T A \in \mathbb{R}^{n\times n}$ is a symmetric positive definite matrix, and vector $h = A^Tb\in \mathbb{R}^{n\times 1}$.

Since the first NNLS algorithm introduced by Lawson and Hanson~\cite{laha95}, researchers have developed many different techniques to solve~\eqref{eq:nnls} and~\eqref{eq:nnqp}, such as active set methods~\cite{beke04,brjo97,laha95}, interior point methods~\cite{bemamo06}, iterative approaches~\cite{kisrdh06} etc.. Among these methods, gradient projection methods are known as the most efficient methods in solving problems with simple constraints~\cite{nowr06}. In this paper, we develop a new multiplicative gradient projection algorithm for NNQP problem. Other similar research can be found in the literature~\cite{brch11,damu86,shlisale07}. 

In the paper~\cite{damu86}, the authors studied a special NNLS problem with nonnegative matrix $Q$ and vector $b$, and proposed an algorithm called the image space reconstruction algorithm (ISRA). The corresponding multiplicative iteration is
\be
x_i \leftarrow x_i \left[ \frac{h_i}{(Qx)_i} \right].\label{eq:isra}
\ee
The proof of the convergence property of ISRA can be found in~\cite{arti95,eggermont90,hahanepr09,depierro87}. More recently, Lee and Seung generalized the idea of ISRA to the problem of non-negative matrix factorization (NMF)~\cite{nmf01}. For general matrix $Q$ and vector $b$ which have both negative and positive entries, the authors proposed another multiplicative iteration~\cite{shlisale07}
\[
x_i \leftarrow x_i \left[ \frac{h_i + \sqrt{h_i^2+4(Q^+x)_i(Q^-x)_i }}{2(Q^+x)_i  } \right].
\]
In~\cite{brch11}, Brand and Chen also introduced a multiplicative iteration
\[
x_i \leftarrow x_i \left[ \frac{(Q^-x)_i + h_i^+ }{(Q^+x)_i + h_i^- } \right],
\]
where $Q^+ = \max(Q,0)$, $Q^- = \max(-Q, 0)$, $h^+=\max(h,0)$, $h^-=\max(-h, 0)$, ``max"  is element-wise comparison of two matrices or vectors. Both above algorithms are proved monotonically converging to global minimum of NNQP objective function~\eqref{eq:nnqp}.

In this paper, we present a new iterative NNLS algorithm along with its convergence analysis. We prove that the quality of the approximation improves monotonically, and the iteration is guaranteed to converge to the global optimal solution. The focus of this paper is theoretical proof of the monotone convergence of the new algorithm. We leave the comparison with other NNLS algorithms for future research. The remainder of this paper is organized as follows. Section~\ref{nsec:alg} presents the new multiplicative NNLS algorithm, we prove the algorithm monotonically decrease the NNQP objective function. In section~\ref{nsec:exp}, we discuss two applications of the new algorithm to image processing problems, including image super-resolution and color image labelling. Finally, in section~\ref{nsec:con}, we conclude by summarizing the main advantage of our approach..

\vspace{-6pt}

\section{Multiplicative Iteration and Its Convergence Analysis~\label{nsec:alg}}

In this section we derive the multiplicative iteration and discuss its convergence properties. 
Consider the NNQP problem~\eqref{eq:nnqp}
\[
\argminx F(x) = \argminx \frac{1}{2} x^TQx - x^Th \quad s.t. \quad x \geq 0,
\]
where $Q = A^TA\in \mathcal{R}^{n\times n}$ is positive definite, $h = A^Tb \in \mR^n$.

\subsection{Multiplicative Iteration}

The proposed multiplicative iteration for solving~\eqref{eq:nnqp} is
\be
x_i \leftarrow x_i \left[ \frac{2(Q^-x)_i + h_i^+ +\delta }{(|Q|x)_i + h_i^- +\delta } \right], \label{eq:update}
\ee
where $|Q| = \text{abs}(Q) = Q^+ + Q^-$, constant stablizer $0 < \delta \ll 1 $ guarantees the iteration monotonically convergent. We will discuss how to choose $\delta$ later in this section. If all the entries in $Q$ and $h$ are nonnegative, the multiplicative update~\eqref{eq:update} reduced to ISRA~\cite{damu86}.

Since all the components of the multiplicative factors, i.e. matrices $Q^+$, $Q^-$, $|Q|$, and vectors $h^+$, $h^-$, are nonnegative, 
\[
\frac{2(Q^-x)_i + h_i^+  +\delta}{(|Q|x)_i + h_i^- +\delta},
\]
given a nonnegative starting initial guess, $x^0$, all the generated iterations, $\{x^k\}$, are nonnegative. In generating the sequence $\{x^k\}$, the iteration computes the new update $x^{i+1}$ by using only the previous vector $x^i$. It does not need to know all the previous updates, $\{x^k\}$. And the major computational tasks to be performed at each iteration are computations of the matrix-vector products, $Q^-x$ and $|Q|x$. These remarkable properties imply that the multiplicative iteration requires little storage and computation for each iteration.

The iteration~\eqref{eq:update} is a gradient projection method which can be shown by
\bea
x_i^{k+1} - x_i^k  &=& \left[ \frac{2(Q^-x^k)_i + h_i^+ +\delta }{(|Q|x^k)_i + h_i^-  +\delta} \right]x_i^k  - x_i^k \nonumber \\
&=& \left[ \frac{2(Q^-x^k)_i + h_i^+ -( |Q|x^k)_i-h_i^-}{(|Q|x^k)_i + h_i^-  +\delta} \right] x_i^k \nonumber \\
&=& -\left[ \frac{(Qx^k)_i - h_i}{(|Q|x^k)_i + h_i^-  +\delta} \right] x_i^k \nonumber \\
&=& -\left[ \frac{x^k_i}{(|Q|x^k)_i + h_i^-  +\delta} \right] ((Qx^k)_i - h_i) \nonumber\\
& =& - \gamma_k \nabla(F(x^k)), \nonumber
\eea
where the step-size $\gamma_k = \left[ \frac{x^k_i}{(|Q|x^k)_i + h_i^-  +\delta} \right]$, and the gradient of the NNQP objection function~\eqref{eq:nnqp} $ \nabla(F(x)) = Qx - h$.

\subsection{Fixed Point}

The proposed iteration~\eqref{eq:update} is motivated by the Karush-Kuhn-Tucker (KKT)  first-order optimal condition~\cite{nowr06}. Consider the Lagrangian function of NNQP objective function~\eqref{eq:nnqp} defined by
\be
\mathcal{L}(x,\mu) = \frac{1}{2} x^T Q x - x^T h - \mu x, \label{eq:nnqplag}
\ee
with the scalar Lagrangian multiplier $\mu_i \geq 0$, assume $x^*$ is the optimal solution of Lagrangian function~\eqref{eq:nnqplag}, the KKT conditions are
\bea
x^* \circ (Qx^* - h - \mu) &=& 0 \nonumber \\
\mu \circ x^* &=& 0,\nonumber
\eea
with $\circ$ denoting the Hadamard product. Above two equalities imply that either $i$th constraint is active $x^*_i=0$, or $\mu_i = 0$ and $(Qx^*)_i - h_i = 0$ when the $i$th constraint is inactive ($x^*_i>0$). Because $Q = |Q| - 2Q^-$, equality $(Qx^*)_i - h_i = 0$ implies $((|Q|-2Q^-)x^*)_i - (h^+_i - h_i^-) = 0$, we obtain $((|Q|)x^*)_i + h_i^-  +\delta = 2(Q^-x^*)_i + h_i^+ +\delta$, which is equivalent to 
\[
\frac{2(Q^-x^*)_i + h_i^+ +\delta}{(|Q|x^*)_i + h_i^- +\delta} = 1,
\]
which means the $i$th multiplicative factor is constant $1$.Therefore, any optimal solution $x^*$ satisfying the KKT conditions conrresponds to a fixed point of the multiplicative iteration.

\subsection{Convergence Analysis}

In this section, we prove the proposed multiplicative iteration~\eqref{eq:update} monotonically decrease the value of the NNQP objective function~\eqref{eq:nnqp} to its global minimum. This analysis is based on construction of an auxiliary function of $F(x)$. Similar techniques have been used in the papers~\cite{nmf01,shlisale07,shsale02}. 

For the sake of completeness, we begin our discussion with a brief review of the definition of auxiliary function.
\begin{defn}\label{def:aux}
Let $x$ and $y$ be two positive vectors, function $G(x,y)$ is an auxiliary function of $F(x)$ if it satisfies the following two properties
\begin{itemize}
\item $F(x) < G(x,y) \quad \mbox{if} \quad x\neq y$;
\item $F(x)=G(x,x)$.
\end{itemize}
\end{defn}

\begin{figure}[t]
\centering
\includegraphics[width= 0.8\textwidth]{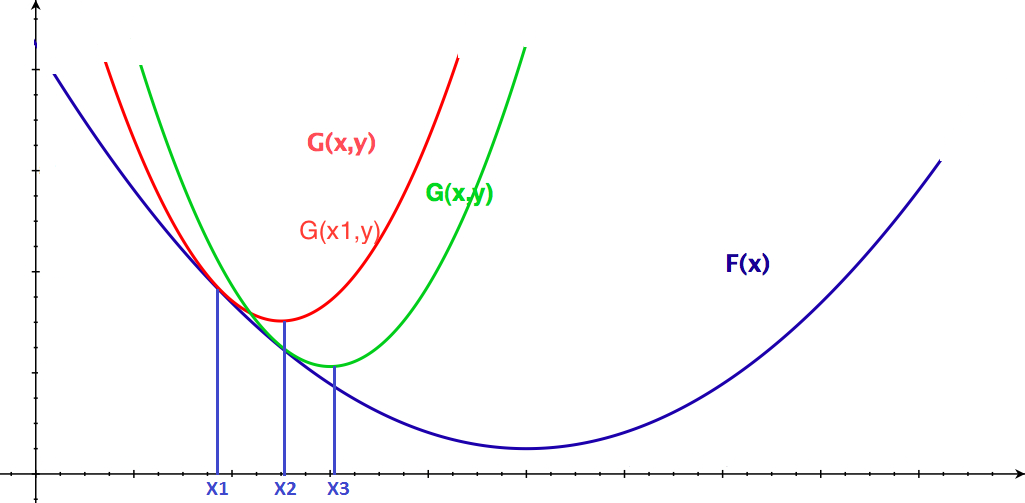}
\caption[Auxiliary function]{Graph illustrating how to searching the minimum of the objective function $F(x)$ by minimizing the auxiliary function $G(x,y)$ iteratively. Each new update $x^i$ is computed by  searching the minimum of the auxiliary function $G(x,y)$ in every step.}
\label{fig:auxfn}
\end{figure}

Figure~\ref{fig:auxfn} illustrates the relationship between the auxiliary function $G(x,y)$ and the corresponding objective function $F(x)$. In each iteration, the updated $x^i$ is computed by minimizing the auxiliary function. The iteration stops when it reaches a stationary point. The following lemma, which is also presented in~\cite{nmf01,shlisale07,shsale02}, proves the iteration by minimizing auxiliary function $G(x,y)$ in each step decreases the value of the objective function $F(x)$.
\begin{lem}\label{lem:aux}
Let $G(x,y)$ be an auxiliary function of $F(x)$, then $F(x)$ is strictly decreasing under the update
\[
x^{k+1} = \argminx G(x,x^k),
\]
if $x^{k+1} \neq x^{k}$.
\end{lem}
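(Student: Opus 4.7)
The plan is to chain the two defining properties of an auxiliary function from Definition~\ref{def:aux} together with the optimality of $x^{k+1}$ as a minimizer of $G(\cdot, x^k)$. Concretely, I would establish the three-term comparison
\[
F(x^{k+1}) < G(x^{k+1}, x^k) \leq G(x^k, x^k) = F(x^k),
\]
and justify each relation in turn; the strict inequality at the left end then gives the strict decrease claim.

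First I would invoke the strict-inequality bullet of Definition~\ref{def:aux}: the hypothesis $x^{k+1}\neq x^k$ is exactly the condition needed to conclude $F(x^{k+1}) < G(x^{k+1}, x^k)$. Next, because $x^{k+1} = \argminx G(x, x^k)$, evaluating $G(\cdot,x^k)$ at any other point — in particular at $x^k$ itself — can only yield a larger value, so $G(x^{k+1}, x^k) \leq G(x^k, x^k)$. Finally, the equality bullet of Definition~\ref{def:aux} with coincident arguments gives $G(x^k, x^k) = F(x^k)$. Concatenating these three steps yields $F(x^{k+1}) < F(x^k)$, which is the monotone decrease asserted by the lemma.

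I do not expect any substantive obstacle, since the argument is essentially definitional; the only subtle point worth highlighting in the write-up is the role of the hypothesis $x^{k+1}\neq x^k$. Without it, the first inequality would only be $\leq$ (via the equality case), and the chain would degrade to $F(x^{k+1}) \leq F(x^k)$ with possible equality, which is why the lemma explicitly excludes the stationary case. I would close by noting that this lemma is precisely the workhorse that will let me reduce the global convergence of the proposed multiplicative update to the purely algebraic task of exhibiting an auxiliary function $G(x,y)$ for the NNQP objective whose minimizer in $x$ reproduces the iteration~\eqref{eq:update}.
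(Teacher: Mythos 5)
Your argument is correct and coincides with the paper's own proof: both establish the chain $F(x^{k+1}) < G(x^{k+1},x^k) \leq G(x^k,x^k) = F(x^k)$, using the strict-inequality property of the auxiliary function for the first step, the minimizer property for the middle inequality, and the equality property for the last. Your additional remark about the role of the hypothesis $x^{k+1}\neq x^k$ is a sensible clarification but does not change the substance.
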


\begin{proof}
By the definition of auxiliary function, if $x^{k+1} \neq x^k$, we have
\[
F(x^{k+1}) < G(x^{k+1}, x^k) \leq G(x^k,x^k) = F(x^k).
\]
The middle inequality is because of the assumption $x^{k+1} = \argminx G(x,x^k)$.
\end{proof}

Deriving a suitable auxiliary function for NNQP objective function $F(x)$~\eqref{eq:nnqp} is a key step to prove the convergence of our multiplicative iteration. In the following lemma, we prove two positive semi-definite matrices which are used to build our auxiliary function later.
\begin{lem}\label{lem:spd}
Let $P\in \mathbb{R}^{n\times n}$ be a nonnegative real symmetric matrix without all-zero rows and $x\in \mathbb{R}^{n\times 1}$ be a vector whose entries are positive. Define the diagonal matrix $D\in \mathbb{R}^{n\times n}$,
\[
  D_{ij} = \left\{
  \begin{array}{l l}
    0 & \quad \text{if $i\neq j$}\\
    \frac{(Px)_i}{x_i} & \quad \text{otherwise}\\
  \end{array} \right.
\]
Then, the matrices, $(D\pm P)$, are positive semi-definite. 
\end{lem}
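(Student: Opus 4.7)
The plan is to prove that $v^{T}(D\pm P)v\ge 0$ for every $v\in\mathbb{R}^{n}$ by a direct quadratic-form computation, in the Lee--Seung ``complete-the-square'' style that underlies the auxiliary-function constructions cited in the paper (\cite{nmf01,shlisale07,shsale02}).

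First I would expand each piece in coordinates. Using the definition of $D$ and the positivity of $x$, one has $v^{T}Dv=\sum_{i}\frac{(Px)_i}{x_i}v_i^{2}=\sum_{i,j}P_{ij}\frac{x_j}{x_i}v_i^{2}$, while $v^{T}Pv=\sum_{i,j}P_{ij}v_iv_j$. Second, I would symmetrize the $v^{T}Dv$ sum using $P_{ij}=P_{ji}$: relabelling $i\leftrightarrow j$ and averaging yields $\sum_{i,j}P_{ij}\frac{x_j}{x_i}v_i^{2}=\frac{1}{2}\sum_{i,j}P_{ij}\bigl(\frac{x_j}{x_i}v_i^{2}+\frac{x_i}{x_j}v_j^{2}\bigr)$, so that $v^{T}(D\pm P)v=\frac{1}{2}\sum_{i,j}P_{ij}\bigl(\frac{x_j}{x_i}v_i^{2}+\frac{x_i}{x_j}v_j^{2}\pm 2v_iv_j\bigr)$.

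The punchline is that each bracket is a perfect square: setting $a=v_i\sqrt{x_j/x_i}$ and $b=v_j\sqrt{x_i/x_j}$, which is well defined because $x_i,x_j>0$, the bracket equals $(a\pm b)^{2}\ge 0$. Combined with $P_{ij}\ge 0$, every summand is nonnegative, hence $v^{T}(D\pm P)v\ge 0$, which is the desired positive semidefiniteness.

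The only real obstacle is recognising the symmetrisation plus complete-the-square move; once that has been made, the verification is mechanical and uses the symmetry, nonnegativity, and positivity hypotheses each exactly once. I would also remark that the ``no all-zero row'' assumption on $P$ is not strictly needed for the PSD conclusion itself; it only guarantees $D_{ii}=(Px)_i/x_i>0$, so that $D$ is positive definite (hence invertible), which is what the subsequent auxiliary-function construction and the resulting multiplicative update~\eqref{eq:update} will require.
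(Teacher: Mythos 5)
Your proof is correct and is essentially the paper's own argument: the paper first conjugates $D\pm P$ by $\diag(x_i)$ and shows $z^T\diag(x_i)(D\pm P)\diag(x_i)\,z=\frac{1}{2}\sum_{ij}P_{ij}x_ix_j(z_i\pm z_j)^2\geq 0$, which is exactly your symmetrize-and-complete-the-square computation after the substitution $v=\diag(x_i)z$ (your $a,b$ become $z_i\sqrt{x_ix_j}$ and $z_j\sqrt{x_ix_j}$). Your closing remark is also consistent with the paper, whose proof likewise never invokes the no-all-zero-rows hypothesis for semidefiniteness.
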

\begin{proof}
Consider the matrices,
\[
M_1 = \diag(x_i) (D +P) \diag(x_i), \quad M_2 = \diag(x_i) (D-P) \diag(x_i),
\]
where $\diag(x_i)$ represents the diagonal matrix whose entries on the main diagonal are the entries of vector $x$. Since $x$ is a positive vector, $\diag(x_i)$ is invertible. Hence, $D\pm P$ are congruent with $M_1$, $M_2$, correspondingly. The matrices $D\pm P$ are positive semi-definite if and only if $M_1$ and $M_2$ are positive semi-definite~\cite{hojo90}. 

 Given any nonzero vector $z\in \mathbb{R}^{n\times 1}$,
\begin{eqnarray}
z^T M_1 z &=& \sum_{ij} (D_{ij} + P_{ij})x_i x_j z_i z_j \nonumber \\
&=& \sum_{ij}D_{ij}x_i x_j z_i z_j + \sum_{ij} P_{ij} x_i x_j z_i z_j \nonumber \\
&=& \sum_i D_{ii}x_i^2z_i^2+ \sum_{ij} P_{ij} x_i x_j z_i z_j \nonumber \\
&=& \sum_i (Px)_ix_iz_i^2+ \sum_{ij} P_{ij} x_i x_j z_i z_j \nonumber \\
&=& \sum_{ij} P_{ij} x_i x_j z_i^2 + \sum_{ij} P_{ij} x_i x_j z_i z_j \nonumber \\
&=& \frac{1}{2} \sum_{ij} P_{ij} x_i x_j (z_i + z_j)^2  \geq 0 \nonumber
\end{eqnarray}
Hence, $M_1$ is positive semi-definite. Similarly, we have $M_2$ is positive semi-definite,
\[
z^T M_2 z =  \frac{1}{2} \sum_{ij} P_{ij} x_i x_j (z_i - z_j)^2  \geq 0.
\]
Therefore, $D \pm P$ are positive semi-definite.
\end{proof}

Combining two previous lemmas, we construct an auxiliary function for NNQP~\eqref{eq:nnqp} as follows.
\begin{lem}[Auxiliary Function]\label{lem:G(x,y)}
Let vectors $x$ and $y$ represent two positive vectors, define the diagonal matrix, $D(y)$, with diagonal entries
\[
D_{ii} =  \frac{(|Q|y)_i+h^-_i   +\delta}{y_i} , \quad i = 1, 2, \cdots, n, \quad  \delta>0.
\]
Then the function
\be
G(x,y) = F(y)+(x-y)^T\nabla F(y)+\frac{1}{2}(x-y)^TD(y)(x-y) \label{eq:aux}
\ee
is an auxiliary function for quadratic model
\[
F(x) = \frac{1}{2}x^TQx -x^Th.
\]
\end{lem}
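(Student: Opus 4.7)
The plan is to verify the two defining properties of Definition~\ref{def:aux} for $G(x,y)$ in \eqref{eq:aux}. The equality $G(x,x)=F(x)$ is immediate: setting $y=x$ in \eqref{eq:aux} annihilates the last two summands and leaves $F(x)$. The real work is the strict inequality $F(x)<G(x,y)$ whenever $x\neq y$.

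Because $F$ is quadratic, the Taylor expansion about $y$ is exact,
\[
F(x)=F(y)+(x-y)^T\nabla F(y)+\tfrac{1}{2}(x-y)^TQ(x-y),
\]
so subtracting from \eqref{eq:aux} gives
\[
G(x,y)-F(x)=\tfrac{1}{2}(x-y)^T\bigl(D(y)-Q\bigr)(x-y).
\]
The entire claim therefore reduces to showing that $D(y)-Q$ is a positive definite matrix.

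To show this, I would use the splits $|Q|=Q^++Q^-$ and $Q=Q^+-Q^-$ and accordingly decompose $D(y)=D_+(y)+D_-(y)+D_h(y)$, where $\bigl(D_\pm(y)\bigr)_{ii}=(Q^\pm y)_i/y_i$ and $\bigl(D_h(y)\bigr)_{ii}=(h_i^-+\delta)/y_i$. Rearranging,
\[
D(y)-Q=\bigl(D_+(y)-Q^+\bigr)+\bigl(D_-(y)+Q^-\bigr)+D_h(y).
\]
Since $Q^+$ and $Q^-$ are both nonnegative symmetric matrices, Lemma~\ref{lem:spd} applied respectively with $P=Q^+$ (minus sign) and $P=Q^-$ (plus sign) ensures that the first two parenthesised blocks are each positive semi-definite. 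The third summand $D_h(y)$ is diagonal with entries bounded below by $\delta/y_i>0$, so it is positive definite. A sum of positive semi-definite matrices plus a positive definite one is positive definite, so $D(y)-Q$ is positive definite, and hence $G(x,y)-F(x)>0$ for every $x\neq y$.

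The main conceptual obstacle is the decomposition step: the split $Q=|Q|-2Q^-$ that is implicit in the very definition of $D(y)$ does not line up cleanly with Lemma~\ref{lem:spd}; it is the symmetric split $Q=Q^+-Q^-$ that allows the lemma to be invoked with \emph{opposite} signs on its two halves and makes the algebra collapse into a sum of semidefinite blocks. The role of the stabiliser $\delta>0$ is precisely to upgrade the semidefiniteness furnished by Lemma~\ref{lem:spd} to the strict positive definiteness needed to turn $F\le G$ into the strict $F<G$ demanded by Definition~\ref{def:aux}.
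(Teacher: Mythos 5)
Your proposal is correct and follows essentially the same route as the paper: exact Taylor expansion of the quadratic $F$, reduction to positive definiteness of $D(y)-Q$, the identical decomposition $D(y)-Q=\bigl(D_+(y)-Q^+\bigr)+\bigl(D_-(y)+Q^-\bigr)+D_h(y)$, Lemma~\ref{lem:spd} applied to $Q^+$ and $Q^-$ with opposite signs, and the $\delta$-term supplying strict definiteness. The only (harmless, and shared with the paper) loose end is that Lemma~\ref{lem:spd} is stated for matrices without all-zero rows, a hypothesis $Q^+$ or $Q^-$ need not satisfy, though the semi-definiteness conclusion still holds in that case.
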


\begin{proof}
First of all, it is obvious that $G(x,x) = F(x)$ which is the second property in Definition~\ref{def:aux}. Next, we have to show the first property, $G(x,y)> F(x)$ for $x\neq y$. 

Notice that $Q$ is the Hesssian matrix of $F(x)$. The Taylor expansion of $F(x)$ at $y$ is
\[
F(x) = F(y)+(x-y)^T\nabla F(y) +\frac{1}{2}(x-y)^TQ(x-y).
\]
The difference between $G(x,y)$ and $F(x)$ is 
\[
G(x,y) - F(x) = \frac{1}{2}(x-y)^T(D(y)-Q)(x-y).
\]
$G(x,y)> F(x)$ for $x\neq y$ if and only if $(D(y)-Q)$ is positive definite.

Recall that $|Q| = Q^+ + Q^-$, $|Q|y = Q^+y + Q^-y$,
\begin{eqnarray}
D(y)-Q &=& \diag\left(\frac{(|Q|y)_i+h_i^- +\delta }{y_i}\right)-Q \nonumber \\
&=&  \diag\left(\frac{(|Q|y)_i+h_i^- +\delta }{y_i}\right) - ( Q^+ -Q^- )\nonumber \\
&=&  \diag\left(\frac{(Q^+y)_i}{y_i}\right)-Q^+ + \diag\left(\frac{(Q^-y)_i}{y_i}\right)+Q^- +\diag\left(\frac{h^-_i +\delta}{y_i}\right)\nonumber
\end{eqnarray}
Because $\diag\left(\frac{h^-_i +\delta}{y_i}\right)$ is positive definite for $\delta > 0$, and by Lemma~\ref{lem:spd}, $ \diag\left(\frac{(Q^+y)_i}{y_i}\right)-Q^+$ and $\diag\left(\frac{(Q^-y)_i}{y_i}\right)+Q^-$ are positive semi-definite. Thus, $(D(y)-Q)$ is positive definite. 

Hence, we obtain $G(x,y)>F(x)$ for any vectors $x\neq y$. Therefore, $G(x,y)$ is an auxiliary of $F(x)$.
\end{proof}

In previous proof, we use the fact $\delta > 0$ to prove the diagonal matrix $\diag(\frac{h_i^- + \delta}{y_i})$ is positive definite. Because matrix $\diag(\frac{h_i^- }{y_i})$ is positive semi-definite for vector $y$ with all positive entries, we can choose $\delta$ to be any positive number. In our experiments, it is chosen to be $eps = 10^{-16}$. Armed with previous lemmas, we are ready to prove the convergence theorem for our multiplicative iteration~\eqref{eq:update}.

\begin{thm}[Monotone Convergence]\label{thm:nnqp}
The value of the objective function $F(x)$ in~\eqref{eq:nnqp} is monotonically decreasing under the multiplicative update
\[
x^{k+1}_i = x^k_i \left[ \frac{2(Q^-x^k)_i+h_i^+ +\delta }{(|Q|x^k)_i+h_i^-  +\delta}\right].
\]
It attains the global minimum of $F(x)$ at the stationary point of the iteration.
\end{thm}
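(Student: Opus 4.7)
The plan is to combine Lemma~\ref{lem:aux} with the auxiliary function from Lemma~\ref{lem:G(x,y)}: if I can identify the multiplicative update $x^{k+1}$ with the unconstrained minimizer $\argminx G(x, x^k)$, then strict monotone decrease $F(x^{k+1}) < F(x^k)$ follows at once from Lemma~\ref{lem:aux}, provided $x^{k+1}\neq x^k$.

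First I would compute the minimizer explicitly. Since $D(x^k)$ is diagonal with strictly positive entries whenever $x^k > 0$ and $\delta > 0$, the function $G(\cdot, x^k)$ is a strictly convex quadratic in $x$, so its unique minimizer is obtained by setting $\nabla_x G = \nabla F(x^k) + D(x^k)(x - x^k) = 0$, giving $x = x^k - D(x^k)^{-1}\nabla F(x^k)$. Because $D(x^k)$ is diagonal, this splits componentwise; substituting $\nabla F(x^k) = Qx^k - h$ together with the algebraic identities $|Q|x^k - Qx^k = 2 Q^- x^k$ and $h_i + h_i^- = h_i^+$, the expression for $x_i$ collapses exactly to
\[
x_i \;=\; x_i^k \cdot \frac{2(Q^- x^k)_i + h_i^+ + \delta}{(|Q|x^k)_i + h_i^- + \delta},
\]
which is the proposed multiplicative iteration. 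Lemma~\ref{lem:aux} then yields $F(x^{k+1}) < F(x^k)$ whenever $x^{k+1}\neq x^k$.

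For the global-minimum claim, I would invoke the fixed-point analysis already carried out in the paper: at a stationary iterate $x^{k+1} = x^k$ each multiplicative factor equals $1$, which rearranges into the KKT system for~\eqref{eq:nnqp}. Because $F$ is strictly convex ($Q$ positive definite) and $\{x \geq 0\}$ is convex, the KKT conditions are sufficient for global optimality, so any stationary point of the iteration coincides with the unique global minimizer $x^*$.

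The main obstacle I anticipate is ensuring that the positivity hypotheses required by Lemma~\ref{lem:G(x,y)} and Lemma~\ref{lem:spd} are preserved from one iterate to the next, so that the auxiliary-function argument can be chained. The role of the stabilizer $\delta>0$ is precisely this: it keeps both numerator and denominator bounded below by $\delta$, so every $x_i^k>0$ is multiplied by a strictly positive finite factor and remains strictly positive, while components that start at zero stay at zero and can be handled by restricting to the active support. Granted this invariant, the auxiliary-function inequality applies at every step, producing a monotonically decreasing sequence of objective values with equality only at a KKT point, which is the global minimum of $F$.
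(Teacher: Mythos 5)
Your proposal is correct and follows essentially the same route as the paper: both identify the multiplicative update with the unconstrained minimizer of $G(\cdot,x^k)$ by setting $\nabla_x G(x,x^k)=\nabla F(x^k)+D(x^k)(x-x^k)=0$ and using the identities $Q=|Q|-2Q^-$ and $h=h^+-h^-$, then invoke Lemma~\ref{lem:aux} for the strict decrease. The only place you diverge is the endgame. The paper argues via the Monotone Convergence Theorem that the bounded decreasing sequence $\{F(x^k)\}$ converges to some $F^*=F(x^*)$ and then asserts $\nabla F(x^*)=0$; you instead observe that a fixed point of the iteration satisfies the KKT system and that KKT is sufficient for global optimality since $Q$ is positive definite and the feasible set is convex. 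Your route is arguably tighter, since it correctly distinguishes the two ways a component can be stationary (either $x_i^*=0$ or the multiplicative factor equals $1$, matching the complementarity conditions), whereas the paper's closing step quietly assumes the limit point is the global minimizer in order to conclude the gradient vanishes. You also make explicit the positivity invariant (strictly positive iterates stay strictly positive because numerator and denominator are bounded below by $\delta$), which the paper uses implicitly but never states; that is a genuine and worthwhile addition, since Lemma~\ref{lem:G(x,y)} requires $x^k>0$ at every step for the auxiliary-function argument to chain.
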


\begin{proof}
By the auxiliary function definition~\ref{def:aux} and Lemma~\ref{lem:G(x,y)}, $G(x,y)$ in~\eqref{eq:aux} is an auxialiry of function $F(x)$. Lemma~\ref{lem:aux} shows that the objective function $F(x)$ is monotonically decreasing under the update
\[
x^{k+1} = \argminx\  G(x,x^k) \quad \mbox{if}\quad x^{k+1} \neq x^k.
\]
It remains to show that the proposed iteration~\eqref{eq:update} approaches the minimum of $G(x,x^k)$. 

By Fermat's theorem~\cite{rudin}, taking the first partial derivative of $G(x,y)$ with respect to $x$, and setting it to $0$, we obtain that
\begin{equation}
\nabla_x G(x,x^k) = \nabla F(x^k) + D(x^k)(x-x^k) = 0.
\end{equation}
Hence, 
\begin{eqnarray}
x &=& x^k - (D(x^k))^{-1}\nabla F(x^k) \nonumber\\
&=& x^k - (D(x^k))^{-1} (Qx^k-h) \nonumber \\
&=& x^k - (D(x^k))^{-1} (|Q|x^k+h^- +\delta -2Q^-x^k-h^+-\delta) \nonumber \\
&=& x^k -  (D(x^k))^{-1} (|Q|x^k+h^- +\delta) + (D(x^k))^{-1} (2Q^-x^k+h^+ +\delta)\nonumber \\
&=&  (D(x^k))^{-1} (2Q^-x^k+h^+ +\delta)\nonumber \\
&=& \diag \left( \frac{2(Q^-x^k)_i+h_i^+ +\delta} {(|Q|x^k)_i+h^-_i +\delta}\right) x^k \nonumber
\end{eqnarray}
where we used the facts that  $(D(x^k))^{-1} (|Q|x^k+h^- +\delta) =x^k$.

The decreasing sequence $\{F(x^k)\}$ is bounded below $-b^Tb$. By Monotone Convergence Theorem~\cite{rudin}, the sequence converges to the limit $F^*$. Because $F(x)$ is continuous, given any compact domain, there exists $x^*$ such that $F(x^*) = F^*$. Since $F(x^*)$ is the global minimum of $F(x)$, the gradient of $F(x)$ at $x^*$ is zero, i.e.
\[
\nabla F(x^*) = Qx^* - h = 0,
\]
which is equivalent to 
\[
 \frac{2(Q^-x^*)_i+h_i^+ +\delta}{(|Q|x^*)_i+h_i^- +\delta} =1,
\]
which means $x^*$ is a stationary point. Thus, the sequence $\{F(x^k)\}$ converges to the global minimum $F(x^*)$ as $\{x^k\}$ approaches to the limit point $x^*$.
\end{proof}

\vspace{-6pt}

\section{Numerical Experiments~\label{nsec:exp}}

We now illustrate two applications of the proposed NNLS algorithm in image processing problems.

\subsection{Image Super-Resolution}

 \begin{figure}[t]
\centering
\subfloat[4 frames of 30 input low-resolution frames]{\label{fig:textlr1}\includegraphics[width= 0.4\textwidth]{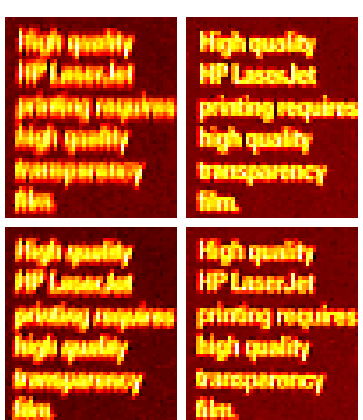}} \hspace{.25in}
\subfloat[Restored high-resolution image by the proposed multiplicative iteration]{\includegraphics[width= 0.385\textwidth]{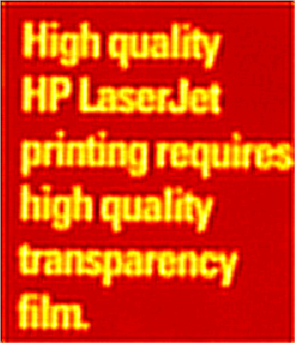}\label{fig:textSR4}}
\caption[]{Super-resolution example 1. Left: 4 sample of 30 input low-resolution image with size of $57\times 49$ pixels. Right: restored high-resolution image with size $285\times 245$ pixels.}
\label{fig:textSR}

\subfloat[4 frames of 16 input low-resolution frames]{\label{fig:eialr1}\includegraphics[width= 0.4\textwidth]{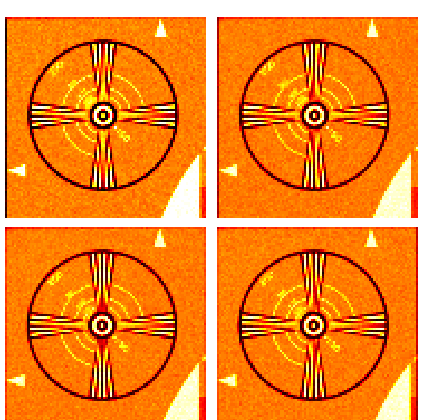}} \hspace{.25in}
\subfloat[Restored high-resolution image by the proposed multiplicative iteration]{\includegraphics[width= 0.382\textwidth]{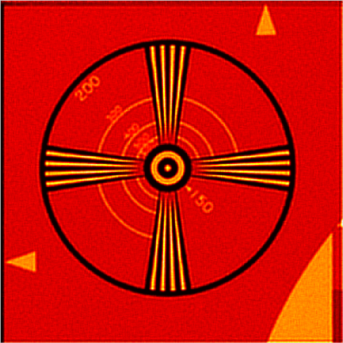}\label{fig:eiaSR2}}
\caption[]{Super-resolution example 1. Left: 4 sample of 16 input low-resolution image with size of $90\times 90$ pixels. Right: restored high-resolution image with size $360\times 360$ pixels.}
\label{fig:eiaSR}
\end{figure}

Image super-resolution (SR) refers to the process of combining a set of low resolution images into a single high-resolution image~\cite{chen08,chhana06}. Each low-resolution image $y_k$ is assumed to be generated from an ideal high-resolution image $x$ via a displacement $S_k$, a decimation $D_k$, and a noise process $n_k$:
\be
y_k = D_kS_kx + n_k, \quad k=1,2,\cdots, K. ~\label{eq:sr}
\ee

We use the bilinear interpolation proposed by Chung, Haber and Nagy~\cite{chhana06}~\footnote{Thanks to Julianne Chung for providing the Matlab code.} to estimate the displacement matrices $S_k$. Then we
reconstruct the high-resolution image by iteratively solving the NNLS
\[
\argminx \frac{1}{2}\sum_{k=1}^K ||D_k S_k x - y_k||^2, \quad \mbox{s.t.} \quad x\geq 0
\]

The low-resolution test data set is taken from the Multi-Dimensional Signal Processing Research Group (MDSP)~\cite{faroelmi03}. Figure~\ref{fig:textlr1} shows 4 of the $30$ uncompressed low-resolution text images of size $57\times 49$ pixels. The reconstructed high-resolution image of size $285\times 245$ pixels is computed by our algorithm is shown in Figure~\ref{fig:textSR4}. Figure~\ref{fig:eialr1} shows 4 of $16$ low-resolution EIA images of size $90\times 90$ pixels. Figure~\ref{fig:eiaSR2} shows the reconstructed $360\times 360$ pixels high-resolution image computed by the proposed multiplicative iteration. As shown in the figures, the high-resolution images are visually much better than the low-resolution images.

\subsection{Color Image Labeling}

In Markov random fields (MRF)-based interactive image segmentation techniques, the user labels a small subset of pixels, and the MRF propagates these labels across the image, typically finding high-gradient contours as segmentation boundaries where the labeling changes~\cite{ridata08}. These techniques require users to impose hard constraints by indicating certain pixels (seeds) that absolutely have to be part of the labeling $k$. Intuitively, the hard constraints provide clues as to what the user intends to segment. Denote $X$ as the $m$-by-$n$ test RGB images, $X_{ij}$ represent a $3$-by-$1$ vector at pixel $(i,j)$.

 \begin{figure}[t]
\centering
\subfloat[flowers]{\label{fig:flower}\includegraphics[width= 0.4\textwidth]{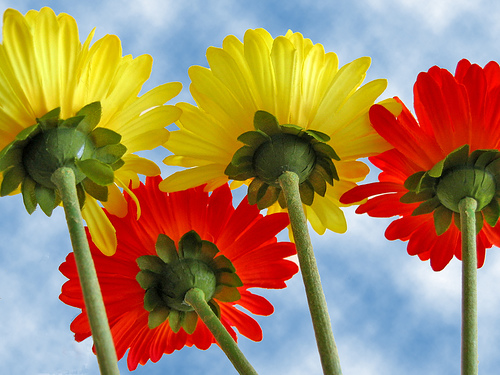}} \hspace{.25in}
\subfloat[segmented image]{\includegraphics[width= 0.4\textwidth]{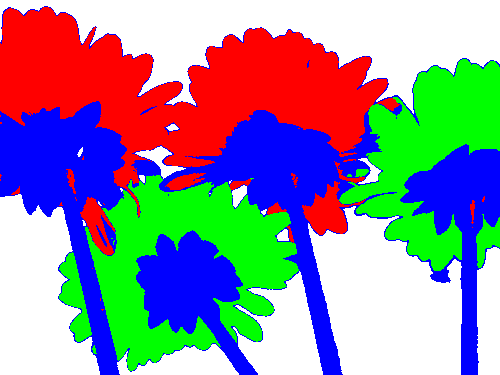}\label{fig:flowerseg}}\\
\subfloat[Manhattan skyline]{\label{fig:man}\includegraphics[width= 0.4\textwidth]{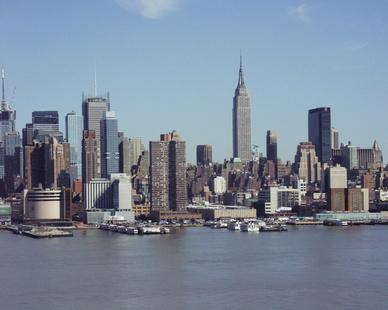}} \hspace{.25in}
\subfloat[segmented image]{\includegraphics[width= 0.4\textwidth]{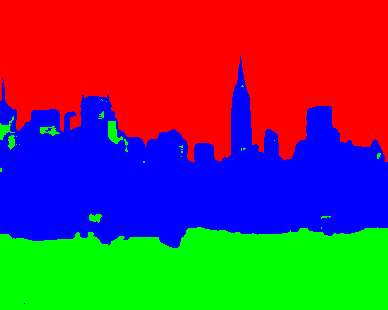}\label{fig:manseg}}
\caption[NNLS/MRF image labeling results]{Sample image labeling results. using MRF model solved by the proposed NNLS algorithm}
\label{fig:label}
\end{figure}

The class set is denoted by $\mathcal{C}=\{1,2,\cdots,K\}$. The probabilistic labeling approaches compute a probability measure field for each pixel $(i,j)$,
\[
\mathcal{X} = \{X^k_{ij}: k\in\mathcal{C}, i=1,2,\cdots,m, j = 1,2,\cdots,n\}
\]
with the constraints
\be
\sum_{k=1}^KX^k_{ij} =1, \quad X^k_{ij}\geq 0, \quad \forall k\in \mathcal{C}.~\label{eq:segcon}
\ee
Denoting $\mathcal{N}_{ij} = \{(i', j'): \min\{ |i'-i|,|j'-j| \} = 1\}$ as the set of neighbors of pixel $(i,j)$, the cost function are in the following quadratic form
\be
\argminx \sum_{k=1}^K \sum_{i=1}^m \sum_{j=1}^n \left(\frac{\alpha}{2}\sum_{(i',j')\in\mathcal{N}_ij}\omega_{iji'j'}(X^k_{i'j'}-X^k_{ij})^2 + D^k_{ij} X^k_{ij}\right),~\label{eq:segobj}
\ee
with the constraints~\eqref{eq:segcon}. $D^k_{ij}$ is the cost of assigning label $k$ to pixel $(i,j)$. The first term, $\sum_{(i',j')\in\mathcal{N}_ij}\omega_{iji'j'}(X^k_{i'j'}-X^k_{ij})^2$, which controls the granularity of the regions, promotes smooth regions. The spatial smoothness is controlled by the positive parameter, $\alpha$, and weight, $\omega$, which is chosen such that $\omega_{iji'j'}\approx 1$ if the neighbouring pixels $(i,j)$ and $(i',j')$ are likely to belong to the same class and $\omega_{iji'j'}\approx 0$ otherwise. In these experiments, $\omega$ is defined to be the cosine of the angle between two neighbouring pixels,
\[
\omega_{iji'j'} = \frac{X_{ij}^T X_{i'j'}}{|X_{ij}|\cdot |X_{i'j'}|}.
\]

\begin{algorithm}[t]
\caption{NNLS Algorithm MRF Image Segmentation}\label{alg:nnlsseg}
\begin{algorithmic}[1]
\While{$\text{norm}((X^k)^\text{new}- (X^k)^\text{old})/\text{norm}((X^k)^\text{old}) > \text{tol}$}
\State Update the probability measure field
\[
(X^k_{ij})^\text{new} = (X^k_{ij})^\text{old} * \frac{2\alpha \sum_{(i',j')\in\mathcal{N}_{ij}}\omega_{iji'j'}X^k_{ij} + (D^k_{ij})^-+\lambda_{ij}} {\alpha  X^k_{ij}\left(\sum_{(i',j')\in\mathcal{N}_{ij}}\omega_{iji'j'}\right) + \alpha\sum_{(i',j')\in\mathcal{N}_{ij}}\omega_{iji'j'}X^k_{ij} + (D^k_{ij})^+}
\]
\State Update the Lagrangian parameter
\[
\lambda_{ij}^\text{new} = \lambda_{ij}^\text{old}* \frac{1}{\sum_{k}X^k_{ij}}
\]
\EndWhile
\State \textbf{return} $(X^k)^\text{new}$
\end{algorithmic}
\end{algorithm}

The cost of labeling $k$ at each pixel $(i,j)$, $D^k_{ij}$, is trained with a Gaussian mixture model~\cite{xujo95} using seeds labeled by the user. Given sample mean, $\mu^{k}$, and variance, $\sigma^{k}$, for the seeds with labeling $k$, $D^k_{ij}$ is computed as the Mahalanobis distance~\cite{ma36} between each pixel of the image and the seeds,
\[
D^k_{ij} = \frac{1}{2}\sum_{k=1}^K (X_{ij}-\mu^{k})^T(\Sigma^{k})^{-1}(X_{ij}-\mu^{k}) + \frac{1}{2}\log(\Sigma^{k}).
\]
The KKT optimality conditions
\bea
\alpha\sum_{(i',j')\in\mathcal{N}_{ij}}\omega_{iji'j'}(X^k_{ij}-X^k_{i'j'}) + D^k_{ij} - \lambda_{ij} &=& 0 \nonumber \\
\alpha X^k_{ij}\left(\sum_{(i',j')\in\mathcal{N}_{ij}}\omega_{iji'j'}\right)-\alpha \sum_{(i',j')\in\mathcal{N}_{ij}}\omega_{iji'j'}X^k_{ij} + D^k_{ij}-\lambda_{ij} &=& 0\nonumber
\eea
yield a two-step Algorithm~\ref{alg:nnlsseg}.

In the experiments, we implement our NNLS algorithm without explicitly constructing the matrix $Q$. Figure~\ref{fig:label} shows the results of the labeled images. 

\vspace{-6pt}

\section{Summary~\label{nsec:con}}

In this paper, we presented a new graduent projection NNLS algorithm and its convergence analysis. By expressing the KKT first-order optimality conditions as a ratio, we obtained a multiplicative iteration that could quickly solves large quadratic programs on low-cost parallel compute devices. The iteration monotonically converges from any positive initial guess. We demonstrated applications to image super-resolution and color image labelling. Our algorithm is also applicable to solve other optimization problems involving nonnegativity contraints. Future research includes comparing the performance of this new NNLS algorithm with other existing NNLS algorithms.

\bibliographystyle{plain}
\bibliography{nnls_ref}

\begin{thebibliography}{10}

\bibitem{arti95}
G.E.B. Archer and D.M. Titterington.
\newblock The iterative image space reconstruction algorithm as an alternative
  to the {EM} algorithm for solving positive linear inverse problems.
\newblock {\em Statistica Sinica}, 5:77--96, 1995.

\bibitem{bemamo06}
S.~Bellavia, M.~Macconi, and B.~Morini.
\newblock An interior point {N}ewton-like method for non-negative least-squares
  problems with degenerate solution.
\newblock {\em Numerical Linear Algebra with Applications}, 13(10):825--846,
  2006.

\bibitem{beke04}
M.~Van Benthem and M.~Keenan.
\newblock Fast algorithm for the solution of large-scale
  non-negativity-constrained least squares problems.
\newblock {\em J. of Chemometrics}, 18(10):441--450, 2004.

\bibitem{brch11}
M.~Brand and D.~Chen.
\newblock Parallel quadratic programming for image processing.
\newblock In {\em 18th IEEE International Conference on Image Processing
  (ICIP)}, pages 2261 --2264, September 2011.

\bibitem{brjo97}
R.~Bro and S.~De Jong.
\newblock A fast non-negativity-constrained least squares algorithm.
\newblock {\em J. of Chemometrics}, 11(5):393--401, 1997.

\bibitem{calaresg04}
D.~Calvetti, G.~Landi, L.~Reichel, and F.~Sgallari.
\newblock Non-negativity and iterative methods for ill-posed problems.
\newblock {\em Inverse Problems}, 20(6):1747, 2004.

\bibitem{chen08}
D.~Chen.
\newblock Comparisons of multiframe super-resolution algorithms for pure
  translation motion.
\newblock Master's thesis, Wake Forest University, Winston-Salem, NC, August
  2008.

\bibitem{chhana06}
J.~Chung, E.~Haber, and J.~Nagy.
\newblock Numerical methods for coupled super-resolution.
\newblock {\em Inverse Problem}, 22:1261--1272, 2006.

\bibitem{damu86}
M.~E. Daube-Witherspoon and G.~Muehllehner.
\newblock An iterative image space reconstruction algorthm suitable for volume
  {ECT}.
\newblock {\em Medical Imaging, IEEE Transactions on}, 5(2):61 --66, June 1986.

\bibitem{eggermont90}
P.~P.~B. Eggermont.
\newblock Multiplicative iterative algorithms for convex programming.
\newblock {\em Linear Algebra and its Applications}, 130:25--42, 1990.

\bibitem{faroelmi03}
S.~Farsiu, D.~Robinson, M.~Elad, and P.~Milanfar.
\newblock Fast and robust multi-frame super-resolution.
\newblock {\em IEEE Transactions on Image Processing}, 13:1327--1344, 2003.

\bibitem{hahanepr09}
J.~Han, L.~Han, M.~Neumann, and U.~Prasad.
\newblock On the rate of convergence of the image space reconstruction
  algorithm.
\newblock {\em Operators and Matrices}, 3(1):41--58, 2009.

\bibitem{hojo90}
R.A. Horn and C.R. Johnson.
\newblock {\em Matrix Analysis}.
\newblock Cambridge University Press, 1990.

\bibitem{kisrdh06}
D.~Kim, S.~Sra, and I.~Dhillon.
\newblock A new projected quasi-newton approach for the non-negative least
  squares problem.
\newblock Technical report, The University of Texas at Austin, 2006.

\bibitem{laha95}
C.~Lawson and R.~Hanson.
\newblock {\em Solving Least Squares Problems}.
\newblock SIAM, 3rd edition, 1995.

\bibitem{nmf01}
D.~Lee and S.~Seung.
\newblock Algorithms for non-negative matrix factorization.
\newblock In {\em Advances in Neural Information Processing Systems 13}, pages
  556--562. MIT Press, April 2001.

\bibitem{ma36}
P.~Mahalanobis.
\newblock On the generalised distance in statistics.
\newblock In {\em Proceedings National Institute of Science, India}, number~2
  in 1, pages 49--55, April 1936.

\bibitem{nast00}
J.~Nagy and Z.~Strako$\check{\mbox{s}}$.
\newblock Enforcing nonnegativity in image reconstruction algorithms.
\newblock In {\em SPIE Conference Series}, volume 4121 of {\em SPIE Conference
  Series}, pages 182--190, October 2000.

\bibitem{nowr06}
J.~Nocedal and S.~Wright.
\newblock {\em Numerical Optimization}.
\newblock Springer, New York, 2nd edition, 2006.

\bibitem{depierro87}
A.~De Pierro.
\newblock On the convergence of the iterative image space reconstruction
  algorithm for volume {ECT}.
\newblock {\em Medical Imaging, IEEE Transactions on}, 6(2):174 --175, June
  1987.

\bibitem{ridata08}
M.~Rivera, O.~Dalmau, and J.~Tago.
\newblock Image segmentation by convex quadratic programming.
\newblock In {\em 19th International Conference on Pattern Recognition, 2008},
  pages 1--5, 2008.

\bibitem{rudin}
W.~Rudin.
\newblock {\em Principles of mathematical analysis}.
\newblock McGraw-Hill Book Co., New York, third edition, 1976.
\newblock International Series in Pure and Applied Mathematics.

\bibitem{shlisale07}
F.~Sha, Y.~Lin, L.~Saul, and D.~Lee.
\newblock Multiplicative updates for nonnegative quadratic programming.
\newblock {\em Neural Comput.}, 19(8):2004--2031, 2007.

\bibitem{shsale02}
F.~Sha, L.~Saul, and D.~Lee.
\newblock Multiplicative updates for nonnegative quadratic programming in
  support vector machines.
\newblock In {\em Advances in Neural Information Processing Systems 15}, pages
  1041--1048. MIT Press, 2002.

\bibitem{xujo95}
L.~Xu and M.~Jordan.
\newblock On convergence properties of the {EM} algorithm for gaussian
  mixtures.
\newblock {\em Neural Computation}, 8:129--151, 1995.

\bibitem{zdci08}
R.~Zdunek and A.~Cichocki.
\newblock Nonnegative matrix factorization with quadratic programming.
\newblock {\em Neurocomput.}, 71:2309--2320, June 2008.

\end{thebibliography}

\end{document}